\documentclass{amsart}
\usepackage{graphicx} 

\input{header}

\title{On stable Kim-forking and rosy theories}
\author{Alberto Miguel-Gómez}
\date{31 March 2025}
\address{Department of Mathematics, Imperial College London, London SW7 2AZ, UK}
\email{a.miguel-gomez22@imperial.ac.uk}

\begin{document}

\maketitle
\begin{abstract}
    We provide a partial answer to a question asked independently by Kim and d'Elbée and show that, under the assumption of the stable Kim-forking conjecture, every $\NSOP_1$ rosy theory must be simple. We also prove that the theory of a Frobenius field has stable Kim-forking.
\end{abstract}

\section{Introduction}
Two intensely studied generalisations of the class of simple theories are that of rosy theories (introduced by Onshuus in \cite{onshuus2006properties} and developed further by Onshuus and Ealy in \cite{onshuus2003th} and \cite{ealy2007characterizing}) and that of $\NSOP_1$ theories (introduced by D\v{z}amonja and Shelah in \cite{dzamonja2004maximality}). A common characteristic of both theories is the presence of a ``canonical'' independence relation whose properties determine the class. The connection of these classes with notions of independence was something present in the study of rosy theories from the start, but appeared in the context of $\NSOP_1$ theories more recently, first in the work of Chernikov and Ramsey in \cite{chernikov2016model}, and then explicitly in terms of the notion of Kim-independence in several papers of Kaplan, Shelah, and Ramsey in \cite{kaplan2020kim}, \cite{kaplan2019local}, and \cite{kaplan2021transitivity}. 

The following, natural question regarding the relationship between these two classes was posed by Byungham Kim and Christian d'Elbée, independently of each other, in 2021:
\begin{question}[\protect{cf., \cite{kim2021weak}, \cite{d2021forking}}]
    Is there an $\NSOP_1$ rosy theory which is not simple?
\end{question}
In this paper, we show that an answer to this question can be obtained in connection with another conjecture coming out of the study of simple theories.

Recall that we say a formula $\phi(x,y)$ is \textbf{stable} if there are no sequences $(a_i)_{i<\omega}$ and $(b_i)_{i < \omega}$ such that, for all $i,j < \omega$, we have $\models \phi(a_i, b_j)$ iff $i < j$. In analogy to the stable forking conjecture proposed by Hart, Kim, and Pillay (cf., \cite{kim2001simplicity}), and which also relates to the dependent dividing conjecture stated by Chernikov (cf., \cite[Proposition 4.14]{chernikov2014theories}), Bossut has recently introduced, in \cite{bossut2024stable}, what we call here the \textit{stable Kim-forking conjecture}. Our main result (Theorem \ref{thm:stable-kim-forking-nsop1-and-rosy-is-simple}) says that, if the stable Kim-forking conjecture holds, every $\NSOP_1$ rosy theory $T$ must be simple. In fact, we prove something slightly stronger, since we only require that $T^\eq$ has stable Kim-forking over models.   

The structure of the paper is as follows. After some notation and terminology in \S2, we introduce in \S3 the stable Kim-forking conjecture and discuss several important $\NSOP_1$ examples where it has been verified. We include in that section a proof that the conjecture holds in the theory of a Frobenius field (Proposition \ref{prop:omega-free-pac-has-stable-kim-forking}). Then, in \S4, we prove our main result and, as an application, we conclude that, if the stable Kim-forking conjecture holds, there is no strictly $\NSOP_1$ pregeometric theory with geometric elimination of imaginaries. 
\subsection*{Acknowledgments}
The present work was completed during my PhD at Imperial College London supported by an EPSRC scholarship. I am deeply grateful to my supervisors, David Evans and Charlotte Kestner, for their guidance, suggestions, and constant support. The contents of the paper were developed during a research visit at the University of Notre Dame. I thank Anand Pillay and Nicholas Ramsey for many useful comments and improvements, as well as Atticus Stonestrom for drawing my attention to the class of pregeometric theories.
\section{Preliminaries}

\subsection{Abstract independence relations}
Let $T$ be a complete theory with infinite models and $\M \models T$ a monster model. For us, an \textbf{independence relation} $\ind$ is an $\Aut(\M)$-invariant ternary relation on subsets of our fixed monster model, i.e., $A \ind_C B$ (read as ``$A$ is independent from $B$ over $C$'') iff $\sigma(A) \ind_{\sigma(C)} \sigma(B)$ for all $\sigma \in \Aut(\M)$. Many abstract properties that independence relations may satisfy appear already in \cite{shelah1978classification}, and more explicitly, in \cite{baldwin1988fundamentals}. We follow Adler's modern presentation in \cite{adler2009geometric}:
\begin{itemize}
    \item \textit{Right monotonicity}: For all $A, B, C \subseteq \M$, if $A \ind_C B$ and $B' \subseteq B$, then $A \ind_C B'$.
    \item \textit{Right base monotonicity}: For all $A \subset \M$ and $D \subseteq C \subseteq B \subset \M$, if $A \ind_D B$, then $A \ind_C B$.
    \item \textit{Right extension}: For all $A, B, C \subseteq \M$, if $A \ind_C B$ and $B \subseteq B' \subset \M$, there is some $A' \equiv_{BC} A$ such that $A' \ind_C B'$.
    \item \textit{Symmetry}: For all $A, B, C \subset \M$, $A \ind_C B$ iff $B \ind_C A$.
    \item \textit{Right transitivity}: For all $A \subset \M$ and $D \subseteq C \subseteq B\subset \M$, if $A \ind_C B$ and $A \ind_D C$, then $A \ind_D B$.
    \item \textit{Local character}: For all $A \subset \M$, there is some cardinal $\kappa(A)$ such that, for any $B \subset \M$, there is some $C \subseteq B$ of cardinality $< \kappa(A)$ such that $A \ind_C B$. 
\end{itemize}
We also have left-sided versions for each of the right-sided properties above. We omit the side if both left and right versions of the property hold. We say an independence relation $\ind$ is defined \textbf{over models} if its base can only be a model, i.e., if $A \ind_C B$ is only defined if $C$ is a model of $T$. Given independence relations $\ind$ and $\ind'$, we write $\ind \implies \ind'$ if $A \ind_C B$ implies $A \ind'_C B$. 
\subsection{Rosy and $\NSOP_1$ theories}
In \cite{onshuus2006properties}, Onshuus introduced the notions of \th-forking (read as ``thorn-forking'') and rosy theories, a class that generalises both simple and o-minimal theories. We start by recalling all the relevant definitions from that paper. We follow the conventions in \cite{hoffmann2023thorn}.
\begin{definition}
    Let $\phi(x,y)$ be a formula, $b$ a tuple, and $C$ a set of parameters. 
    \begin{enumerate}[(i)]
        \item We say that $\phi(x,b)$ \textbf{strongly divides over $C$} if the set of images of $\phi(\M,b)$ under elements of $\Aut(\M/C)$ is infinite and $k$-inconsistent for some $k < \omega$.
        \item We say that $\phi(x,b)$ \textbf{\th-divides over $C$} if there is a tuple $c$ such that $\phi(x,b)$ strongly divides over $Cc$. 
        \item We say that $\phi(x,b)$ \textbf{\th-forks over $C$} if there exist formulas $\psi_i(x, c_i)$ for $i < n$ such that $\phi(x,a) \vdash \bigvee_{i < n} \psi_i(x, c_i)$ and each $\psi_i(x,c_i)$ \th-divides over $C$.
        \item We say $A$ is \textbf{\th-independent} from $B$ over $C$ if, for some (equiv., any) enumeration $a$ of $A$, $\tp(a/CB)$ does not contain a formula that \th-forks over $C$. We denote this directly for tuples by $a \thind_C B$.
    \end{enumerate}
\end{definition}
\begin{definition}
    We say a theory $T$ is \textbf{rosy} if $\thind$ in $T^\eq$ satisfies local character. 
\end{definition}
\begin{fact} \label{fact:properties-of-thorn-independence}
    In any theory, $\thind$ satisfies monotonicity, right base monotonicity, left transitivity, and right extension. If $T$ is rosy, then, in $T^\eq$, $\thind$ satisfies in addition symmetry.
\end{fact}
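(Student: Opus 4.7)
\emph{The plan.} I would verify each listed property by unfolding the definition of thorn-forking, handling the theory-general statements first and then treating symmetry under the rosy assumption.

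Monotonicity on both sides is immediate: if $B' \subseteq B$, then $\tp(A/CB') \subseteq \tp(A/CB)$, so the absence of a thorn-forking formula in the larger type entails the same for the smaller; left monotonicity is symmetric. Right base monotonicity reduces to the observation that thorn-dividing, and hence thorn-forking, is monotone in the base parameter: if $\phi(x,b)$ thorn-divides over $C$ witnessed by a tuple $c$ with $\phi(x,b)$ strongly dividing over $Cc$, and $D \subseteq C$, then $\phi(x,b)$ also strongly divides over $Dc'$ with $c' = c \cup (C \setminus D)$, so it thorn-divides over $D$. Taking disjunctions propagates this to thorn-forking, and the contrapositive is exactly right base monotonicity.

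For right extension, I would invoke the finite character of thorn-forking together with compactness: given $A \thind_C B$ and $B \subseteq B'$, the partial type over $CB'$ consisting of $\tp(A/CB)$ together with the negation of every formula over $CB'$ that thorn-forks over $C$ is finitely consistent---the family of formulas not thorn-forking over $C$ is closed under finite conjunction since thorn-forking is closed under disjunction by definition---and any realisation in $\M$ provides the desired $A'$. Left transitivity is a more involved but routine unfolding: a hypothetical thorn-forking formula witnessing failure of the conclusion decomposes, after absorbing auxiliary witness tuples into the parameters, into a thorn-forking witness contradicting one of the two input independences.

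The main obstacle is symmetry in rosy theories. This is Onshuus's theorem from \cite{onshuus2006properties}, and the strategy is as follows: working in $T^\eq$, suppose $A$ is not thorn-independent from $B$ over $C$, and aim to derive that $B$ is not thorn-independent from $A$ over $C$. Using local character of $\thind$ in $T^\eq$ (which is exactly the rosy hypothesis), one builds a sufficiently long sequence of realisations of $\tp(B/CA)$ indexed by a cardinal exceeding the local character bound, applies an Erd\H{o}s--Rado style extraction to obtain an indiscernible subsequence, and converts the assumed $A$-side failure into a strongly-dividing (hence thorn-forking) formula in $\tp(A/CB)$ over $C$. The rosy hypothesis is used exactly to provide the cardinal bound; everywhere else the argument takes place inside the thorn-forking calculus already established.
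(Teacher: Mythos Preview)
The paper does not prove this statement: it is recorded as a \emph{Fact} and deferred to the literature (Onshuus \cite{onshuus2006properties}, Adler \cite{adler2009geometric}). So there is no proof in the paper to compare against; your proposal is an attempt to supply what the paper omits.

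Your sketches for monotonicity, right base monotonicity, and right extension are fine. Two places need work. First, left transitivity for $\thind$ is not a ``routine unfolding'': a thorn-forking witness for $B \nind^{\textnormal{\th}}_D A$ is a disjunction of formulas each strongly dividing over $De_i$ for auxiliary tuples $e_i$, and there is no a priori reason those $e_i$ split neatly into pieces that land in $C$ or over $D$; Onshuus's original argument goes through an intermediate ``$M$-dividing'' relation (or, in Adler's presentation, through the lattice of strong closure operators), and some genuine bookkeeping is required. Your one-sentence description does not indicate how the decomposition is achieved.

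Second, your symmetry sketch ends in the wrong place: you assume $A \nind^{\textnormal{\th}}_C B$ and say the argument produces a thorn-forking formula in $\tp(A/CB)$---but that is exactly what you assumed, not what you need. The target is a thorn-forking formula in $\tp(B/CA)$. More substantively, the standard route from local character to symmetry (following Shelah/Kim, adapted by Onshuus to the thorn setting) proceeds either via an ordinal-valued thorn rank or by a chain argument: assuming $a \thind_C b$ and $b \nind^{\textnormal{\th}}_C a$, one uses extension repeatedly on the $a$-side to build an arbitrarily long sequence along which $b$ fails to become independent, contradicting local character for $b$. Your Erd\H{o}s--Rado extraction of indiscernibles from realisations of $\tp(B/CA)$ is not obviously the right object, and you have not said how the $A$-side failure interacts with that sequence to force thorn-forking on the $B$-side.
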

Another notion of independence that has become widespread in the literature appears in the work of Kaplan and Ramsey in \cite{kaplan2020kim}, who introduced the notion of \textit{Kim-independence}.
\begin{definition} \label{def:kind}
    Let $\phi(x,y)$ be a formula, $b$ a tuple, $(b_i)_{i < \omega}$ a sequence, and $M \models T$.
    \begin{enumerate}[(i)]
        \item For a global type $q$, we say $(b_i)_{i < \omega}$ is a \textbf{Morley sequence in $q$ over $M$} if $b_i \models q|_{Mb_{<i}}$ for all $i \in \omega$. If $q$ is $M$-invariant, we call $(b_i)_{i < \omega}$ an \textbf{$M$-invariant Morley sequence}.
        \item We say that $\phi(x,b)$ \textbf{Kim-divides over $M$} if there is some $M$-invariant Morley sequence $(b_i)_{i < \omega}$ with $b_0 = b$ such that $\{\phi(x, b_i) : i < \omega\}$ is inconsistent. 
        \item We say that $\phi(x,b)$ \textbf{Kim-forks over $M$} if there exist formulas $\psi_i(x, c_i)$ for $i < n$ such that $\phi(x,b) \vdash \bigvee_{i < n} \psi_i(x, c_i)$ and each $\psi_i(x,c_i)$ Kim-divides over $M$.
        \item We say $A$ is \textbf{Kim-independent} from $B$ over $M$ if, for some (equiv., any) enumeration $a$ of $A$, $\tp(a/MB)$ does not contain any formula that Kim-forks over $M$. We denote this by $a \kind_M B$.
        \item We say $(b_i)_{i < \omega}$ is \textbf{$\kind$-Morley over $M$} if it is $M$-indiscernible and $b_i \kind_M b_{<i}$ for all $i$.  
    \end{enumerate}
\end{definition}
One of the main results from \cite{kaplan2020kim} characterises the class of $\NSOP_1$ theories, which had been defined combinatorially by D\v{z}amonja and Shelah in \cite{dzamonja2004maximality}, in terms of Kim-independence. We omit the combinatorial definition of $\NSOP_1$, as we will not need it for what follows, but the following result is useful:
\begin{theorem}[\protect{\cite[Theorem 5.16]{kaplan2020kim}}]
    A theory $T$ is $\NSOP_1$ iff Kim-independence is symmetric.
\end{theorem}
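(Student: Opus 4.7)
The plan is to prove the two directions separately, each by contrapositive. The combinatorial tool on both sides will be the extraction, via Ramsey and compactness (or Erd\H{o}s--Rado for larger arities), of suitably indiscernible trees of parameters; the model-theoretic core will be Kim's lemma for Kim-dividing in $\NSOP_1$ theories together with the independence theorem over models, both of which are established earlier in \cite{kaplan2020kim}.

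For the direction ``symmetry implies $\NSOP_1$'', I would suppose $T$ has $\SOP_1$, witnessed by a formula $\phi(x,y)$ and a tree $(b_\eta)_{\eta \in 2^{<\omega}}$ such that each branch $\{\phi(x, b_{\sigma|n}) : n < \omega\}$ is consistent while $\{\phi(x, b_{\eta\frown 0}), \phi(x, b_{\eta\frown 1})\}$ is inconsistent at every splitting node. After passing to a sufficiently indiscernible subtree over a small model $M$, one can read off two tuples $a, b$ realising the root and a splitting sibling, together with a canonical $M$-invariant Morley sequence along which a specific instance of $\phi$ Kim-divides in one direction but not in the other. This produces the asymmetric situation $a \kind_M b$ while $b \nind^{\textnormal{K}}_M a$, contradicting the hypothesised symmetry.

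For the harder direction ``$\NSOP_1$ implies symmetry'', I would assume $T$ is $\NSOP_1$ and, for contradiction, that $a \kind_M b$ while some $\psi(y,a) \in \tp(b/Ma)$ Kim-divides over $M$. I would take a global $M$-invariant type $q$ with $a \models q|_{Mb}$ and let $(a_i)_{i<\omega}$ be a Morley sequence in $q$ over $M$ starting with $a$. Using extension and the chain condition (available in $\NSOP_1$ via Kim's lemma), I would inductively build a companion sequence $(b_i)_{i<\omega}$ with $a_i b_i \equiv_M ab$ and with $b_i$ suitably Kim-independent over $M$ from the preceding data. Interleaving $(a_i)$ and $(b_i)$ and extracting a tree-indiscernible object by Erd\H{o}s--Rado should yield a tree $(c_\eta)_{\eta \in 2^{<\omega}}$ whose branches are consistent --- propagated from $a \kind_M b$ by repeated use of the independence theorem over $M$ --- but whose incomparable pairs are inconsistent, propagated from the Kim-dividing of $\psi$ by Kim's lemma. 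This is an $\SOP_1$-tree, contradicting $\NSOP_1$.

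The main obstacle is precisely this coordinated tree construction in the second direction: one needs to keep, simultaneously, enough indiscernibility alive to reapply Kim's lemma at every level, enough independence along paths to preserve consistency of the path-types, and enough inconsistency across incomparable siblings to read off the $\SOP_1$ pattern. Balancing these three constraints is the source of the technical complexity of the argument, and it is what forces the iterated interplay of extension, the independence theorem over models, and Erd\H{o}s--Rado extraction of indiscernible trees.
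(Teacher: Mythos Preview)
The paper does not contain a proof of this statement: it is quoted verbatim as \cite[Theorem 5.16]{kaplan2020kim} and used as a black box, so there is no ``paper's own proof'' against which to compare your proposal. Your sketch is a reasonable outline of the argument in the cited Kaplan--Ramsey paper, but for the purposes of the present paper no proof is expected at all.
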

\section{The stable Kim-forking conjecture}
In \cite{bossut2024stable}, Bossut introduces the following property of theories:
\begin{definition}
    Let $T$ be a complete $\mathcal{L}$-theory. We say $T$ has the \textbf{stable Kim-forking property over models} if, for any model $M \models T$ and $B \supseteq M$, whenever $p(x) \in S(B)$ Kim-forks over $M$, there is some stable formula $\phi(x,y) \in \mathcal{L}(M)$ and some $b \in B$ such that $\phi(x,b) \in p(x)$ Kim-forks over $M$. 
\end{definition}
\begin{conjecture}[Stable Kim-forking, \protect{cf., \cite[Question 7.3]{baldwin2024simple}}] \label{conj:stable-kim-forking}
    Every $\NSOP_1$ theory has the stable Kim-forking property over models. 
\end{conjecture}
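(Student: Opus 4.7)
The final statement is a conjecture, not a theorem, and to my knowledge it is currently open; what follows is therefore a strategic sketch modelled on the pattern that succeeds in all the known verified cases (ACFG, parametrised equivalence relations, Granger's bilinear form examples, and, as proved later in the paper, Frobenius fields). The plan is to take a model $M \models T$, a parameter $b$ over $M$, and a formula $\phi(x,b)$ Kim-dividing over $M$, as witnessed by an $M$-invariant Morley sequence $(b_i)_{i<\omega}$ with $b_0 = b$ for which $\{\phi(x,b_i) : i<\omega\}$ is inconsistent; one then wants to produce a stable formula $\psi(x,c) \in \l(M)$, with $c \in \dcl^\eq(Mb)$, such that $\phi(x,b) \vdash \psi(x,c)$ and $\psi(x,c)$ itself Kim-divides over $M$.

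The second step would exploit the fact that, in every $\NSOP_1$ example where the conjecture is known, Kim-independence over models admits an explicit description in terms of algebraic or closure-theoretic independence on a stable ``core'' or reduct of $T$: algebraic independence of fields in ACFG and in the Frobenius case, linear independence in the bilinear form examples, and coincidence of equivalence classes in parametrised equivalence relations. In each setting, failure of Kim-independence forces a non-trivial coincidence inside $\acl^\eq(Mb) \cap \acl^\eq(MB)$, and this coincidence is already visible through a stable formula. My plan would therefore be to identify the component of $\acl^\eq(Mb)$ over $M$ forced by $\phi(x,b)$, name it as an imaginary $c$, and check that a suitable algebraic-closure formula (which is always stable) bounding $c$ also Kim-divides along the given Morley sequence.

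The serious obstacle, and presumably the reason the conjecture remains open in general, is that no such ``stable core'' is available a priori in an arbitrary $\NSOP_1$ theory. Even granted the canonical base technology for Kim-independence developed in \cite{kaplan2019local} and \cite{kaplan2021transitivity}, what one obtains is an imaginary canonical base controlling the type but not a stable formula in $\l(M)$ implying $\phi(x,b)$. A reasonable intermediate goal would be to prove the conjecture under the hypothesis that the Kim-canonical base is contained in $\acl^\eq$ of each side, which should reduce matters to a finite equivalence relation argument. The fully general case would, I expect, require a structural theorem on imaginaries in $\NSOP_1$ theories parallel to the role of algebraic closure in simple theories, and that is the step I anticipate as the genuinely hard part.
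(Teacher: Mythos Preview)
You correctly identify that this is stated as an open conjecture, and indeed the paper does not attempt to prove it in general; there is therefore no ``paper's own proof'' to compare against. The paper only verifies the conjecture in specific examples (most notably Frobenius fields, Proposition~\ref{prop:omega-free-pac-has-stable-kim-forking}) and otherwise uses it as a hypothesis for the main result. Your strategic sketch is a reasonable summary of the pattern common to the known cases, and your diagnosis of the obstruction---the absence of a ``stable core'' or analogous structural theorem in an arbitrary $\NSOP_1$ theory---is the expected one. No correction is needed beyond noting that nothing here constitutes a proof, which you already acknowledge.
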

\begin{remark}
    Bossut calls the above property the ``weak stable Kim-forking property,'' in order to distinguish it from a stronger version. However, he shows in \cite[Corollary 3.8]{bossut2024stable} that having the stronger version implies that the theory is simple. Since we focus on the version of the conjecture that applies to (strictly) $\NSOP_1$ theories, we omit this stronger version and follow the terminology in \cite{baldwin2024simple}.
\end{remark}
In what follows, we discuss three of the most studied $\NSOP_1$ examples to show that they all have the stable Kim-forking property, thus providing some positive evidence toward Conjecture \ref{conj:stable-kim-forking}.
\begin{example}
    Let $\mathcal{L}$ be a two-sorted language with sorts $P$ and $O$, and a ternary relation $E \subseteq P \times O^2$. Let $T_{feq}^*$ be the model completion of the theory saying:
    \begin{itemize}
        \item $P$ and $O$ partition the structure.
        \item For all $p \in P$, $E(p, x, y)$ defines an equivalence relation on $O$.
    \end{itemize}
    Chernikov and Ramsey show in \cite[Corollary 6.20]{chernikov2016model} that $T_{feq}^*$ is $\NSOP_1$. Adler proves directly in \cite[Example 2.6]{adler2009geometric} that $T_{feq}^*$ is not rosy. Bossut explains that $T_{feq}^*$ has stable Kim-forking in \cite[discussion after Corollary 4.11]{bossut2024stable}.
\end{example}
\begin{example}
    Let $T_\infty$ be the model companion of the theory of an infinite dimensional vector space over an algebraically closed field with an alternating/symmetric bilinear form in the two-sorted language with a sort for the vector space and another sort for the underlying field.

    Chernikov and Ramsey show that $T_\infty$ is $\NSOP_1$ in \cite[Corollary 6.4]{chernikov2016model}, and Granger shows it is not simple in \cite[Proposition 7.4.1]{granger1999stability}. In \cite{kim2011notions} a proof of non-rosiness is suggested. Finally, by \cite[Proposition 9.37]{kaplan2020kim}, $\kind$ coincides with algebraic independence in $T_\infty$, and so $T_\infty$ has the stable Kim-forking property.
\end{example}
\begin{example}
    We say a field $F$ is \textbf{pseudo-algebraically closed} (or \textbf{PAC}) if every absolutely irreducible variety defined over $F$ has an $F$-rational point.

    Let us denote by $\mathcal{G}(F)$ the absolute Galois group of $F$, viewed as a topological group in the usual fashion. We say $F$ is \textbf{Frobenius} if $\mathcal{G}(F)$ satisfies the \textit{Iwasawa property}, i.e., if $\phi \colon \mathcal{G}(F) \to A$ and $\theta \colon B \to A$ are continuous epimorphisms and $B$ is a finite quotient of $\mathcal{G}(F)$, then there is a continuous epimorphism $\psi \colon \mathcal{G}(F) \to B$ making the following diagram commute:
    \begin{equation*}
        \begin{tikzcd}
            & \mathcal{G}(F) \arrow[d, "\phi"] \arrow[ld, dashed, swap, "\psi"] \\
            B \arrow[r, "\theta"] & A
        \end{tikzcd}
    \end{equation*}
    In particular, whenever $F$ is $e$-free (i.e., $\mathcal{G}(F) \cong \hat{\mathbb{F}}_e$, the free profinite group on $e$ many generators) for some $e \in \omega$, or $\omega$-free (i.e., $\mathcal{G}(F) \cong \hat{\mathbb{F}}_\omega$), it is Frobenius.

    The theory of a Frobenius field $F$ in $\mathcal{L} = \{+, -, \cdot, 0,1\}$ is $\NSOP_1$ (cf., \cite[\S9.3]{kaplan2020kim}). If $F$ is an $e$-free PAC field, it is simple (cf., \cite{hrushovski1991pseudo}), and hence rosy, whereas if it is $\omega$-free it is $\NSOP_1$ (cf., \cite[Corollary 6.2]{chernikov2016model}) and not simple (cf.,  \cite[Theorem 3.9]{chatzidakis1999simplicity}). A proof that the theory of an $\omega$-free PAC field is not rosy is hinted at in \cite[3.5]{chatzidakis2008independence} and written explicitly (and in more generality) in \cite{montenegro2017pseudo}. 
    
    In what follows, we will show that the theory of a Frobenius field has the stable Kim-forking property over models. We use in our proof the notion of an \textit{equation}, first introduced by Srour in \cite{pillay1984closed}. We take as a definition an equivalent characterisation of equations which follows by compactness (cf. \cite{martin2020equational}):
    \begin{definition}
        A formula $\phi(x,y)$ is an \textbf{equation} if, for every indiscernible sequence $(a_i, b_i)_{i < \omega}$, if $\models \phi(a_i, b_j)$ for all $i < j$, then $\models \phi(a_i, b_i)$ for all $i < \omega$.
    \end{definition}
\begin{fact}[\protect{\cite[Proposition 2.6]{pillay1984closed}}] \label{fact:equation-stable}
    If $\phi(x,y)$ is an equation, then it is stable.
\end{fact}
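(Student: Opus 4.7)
The plan is to argue by contraposition: assume $\phi(x,y)$ is unstable and use the definition of an equation to derive a contradiction. First, by the unstability assumption, there exist tuples $(a_i)_{i<\omega}$ and $(b_i)_{i<\omega}$ such that $\models \phi(a_i, b_j)$ iff $i < j$.

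The key preparatory step is to upgrade this witness of the order property to an indiscernible sequence of pairs. I would apply the standard Ramsey plus compactness technique to the sequence of pairs $(a_i, b_i)_{i<\omega}$: by Ramsey's theorem, we can find an indiscernible sequence $(a_i', b_i')_{i<\omega}$ having the same Ehrenfeucht–Mostowski type as some subsequence of $(a_i, b_i)_{i<\omega}$. In particular, for every $i < j$, the pair $(a_i', b_j')$ (respectively $(a_j', b_i')$) realises the two-variable type of some $(a_k, b_l)$ with $k < l$ (resp. $k > l$) in the original sequence, so we retain the property that $\models \phi(a_i', b_j')$ iff $i < j$.

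Now I apply the hypothesis that $\phi(x,y)$ is an equation directly to the indiscernible sequence $(a_i', b_i')_{i<\omega}$. Since $\models \phi(a_i', b_j')$ holds for all $i < j$, the equation property forces $\models \phi(a_i', b_i')$ for all $i < \omega$. But this contradicts the fact that $\phi(a_i', b_j')$ holds iff $i < j$ (taking $i = j$ gives $i < i$, which is false). Hence $\phi(x,y)$ must be stable.

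The main (and only nontrivial) step is the extraction of the indiscernible sequence witnessing the order property; the conclusion is then immediate from the equation hypothesis. No further obstacle arises, since the definition of equation is exactly tailored to rule out the indiscernible version of the order property.
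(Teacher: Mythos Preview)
The paper does not supply its own proof of this statement; it records it as a Fact with a citation to \cite{pillay1984closed}. Your argument is correct and is essentially the standard one: extract an indiscernible sequence of pairs witnessing the order property via Ramsey and compactness, then apply the defining property of an equation to force $\models \phi(a_i',b_i')$ on the diagonal, contradicting $\neg\phi(a_i',b_i')$ (which is part of the $1$-element EM-type and hence preserved under extraction).
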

\begin{remark}
    We use the characterisation of Kim-independence for the theory $T$ of a Frobenius field, which comes from \cite[Theorem 9.32]{kaplan2020kim}: for any $\acl$-closed subfields $C \subseteq A,B$ of a sufficiently saturated field $F \models T$, $A \kind_C B$ iff $A \ind_C^{\text{SCF}} B$ and $S\mathcal{G}(A) \find_{S\mathcal{G}(C)} S\mathcal{G}(B)$, where $\find$ is nonforking independence in the $\omega$-sorted theory of the inverse system of $\mathcal{G}(F)$ (for more details, see \cite{chatzidakis1998model}). 

    In \cite[Lemma 6.2]{chernikov2016model}, Chernikov and Ramsey explicitly give the formula witnessing strong finite character for Kim-independence in the theory of an $\omega$-free PAC field, which they attribute to Zoé Chatzidakis. We note here that the same formula proves strong finite character in the theory of any Frobenius field. In what follows, we briefly describe this formula.
    
    Suppose, towards proving stable Kim-forking, that $A \nind_C^{\text{K}} B$ with $A,B,C$ acl-closed and $C \subseteq A,B$ contained in a large saturated field $F \models T$. If $A \nind_C^{\text{SCF}} B$, this is witnessed by an equation and an inequation, which are always stable formulas. So it suffices to consider the case where $A \ind_C^{\text{SCF}} B$ and $S\mathcal{G}(A) \nind_{S\mathcal{G}(C)}^{\text{f}} S\mathcal{G}(B)$. Using finite character and \cite[Proposition 4.1]{chatzidakis1998model}, it follows that we can find open normal subgroups $M_1 \lhd \mathcal{G}(A)$ and $M_2 \lhd \mathcal{G}(B)$ such that, if $N$ is the smallest open normal subgroup of $\mathcal{G}(F)$ containing $M_1$ and $M_2$, then $N \not\subseteq \mathcal{G}(C)$. 
    
    Using some Galois theory and the fact that $F$ is a regular extension of $A$ and $B$, this means that there are some $\alpha \in \langle AC \rangle^{\textnormal{alg}}$ and $\beta \in \langle BC \rangle^\textnormal{alg}$ not in $F$ such that $F(\alpha) = F(\beta)$ is Galois over $F$ and $\beta \notin F\langle C \rangle^\textnormal{alg}$. We may choose some $b' \in \acl(CB)$ such that $\langle CB\beta \rangle \cap F \subseteq \langle CBb' \rangle$ and $\langle CBb' \rangle$ is closed under $\Aut(\acl(CB)/\langle CB \rangle)$. Similarly, we can pick $a' \in \acl(AC)$ such that $\langle CA \alpha \rangle \cap F \subseteq \langle CAa' \rangle$ and $\langle CAa'\rangle$ is closed under $\Aut(\acl(CA)/\langle CA \rangle)$. (Note: $\alpha, \beta, a', b'$ can be taken as singletons by the Primitive Element Theorem.) 

    The idea now is to find a formula that captures the behaviour of $a,b,\alpha, \beta$ as described above. In order to do so, let:
    \begin{itemize}
        \item $P(Y, b, c)$ be the minimal polynomial of $b'$ over $\langle BC \rangle$.
        \item $Q(Z, Y, b, c)$ be such that $Q(Z, b', b, c)$ is the minimal polynomial of $\beta$ over $\langle CB b' \rangle$.
        \item $R(W, a, c)$ be the minimal polynomial of $a'$ over $\langle AC \rangle$.
        \item $S(W, T, a, c)$ be such that $S(W, a', a, c)$ is the minimal polynomial of $\alpha$ over $\langle CAa' \rangle$. 
    \end{itemize}
    We now take $\phi(t,b,c)$ (with variable $t$) to be the formula saying: ``there exists some $y \equiv_{bc} b'$ and a solution $w$ of $R(W, t, c) = 0$ such that (i) $S(X, w, t, c)$ is irreducible over $F$ of degree $[\langle CA\alpha \rangle : \langle CAa' \rangle]$, and (ii) for every solution $z$ of $Q(Z, y,b,c) = 0$, $F(z)$ contains a solution of $S(X, w, t, c) = 0$.'' This can indeed be expressed using a first-order formula (for more details, see Chernikov and Ramsey's paper). In \cite[Lemma 6.2]{chernikov2016model}, it is shown that $\phi(t,b,c)$ Kim-forks over $C$.
\end{remark}
\begin{proposition} \label{prop:omega-free-pac-has-stable-kim-forking}
    The theory of a Frobenius field has the stable Kim-forking property over algebraically closed sets.
\end{proposition}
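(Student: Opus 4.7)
The plan is to invoke the Kaplan-Ramsey characterisation of Kim-independence recalled in the remark and handle the two mechanisms of Kim-forking separately. Fix algebraically closed subfields $C \subseteq A \cap B$ of a saturated $F \models T$ with $A \nind_C^{\textnormal{K}} B$; by the remark, either (a) $A \nind^{\textnormal{SCF}}_C B$, or (b) $A \ind^{\textnormal{SCF}}_C B$ but $S\mathcal{G}(A) \nind^{\textnormal{f}}_{S\mathcal{G}(C)} S\mathcal{G}(B)$.

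Case (a) is direct. The SCF-forking is witnessed on some $a \in A$ by a polynomial equation $p(x, b) = 0$, possibly together with an inequation. Polynomial equations are equations in Srour's sense: if $(a_i, b_i)_{i < \omega}$ is indiscernible with $p(a_i, b_j) = 0$ for all $i < j$, then $p(a_0, Y) \in F[Y]$ has infinitely many roots $b_1, b_2, \ldots$, hence vanishes identically on these, so $p(a_0, b_0) = 0$ as well. By Fact \ref{fact:equation-stable}, such an equation is stable; since $\ind^{\textnormal{K}} \implies \ind^{\textnormal{SCF}}$ over algebraically closed sets, it Kim-forks over $C$ too.

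Case (b) is the substantive one. The Chernikov-Ramsey formula $\phi(t, b, c)$ described in the remark already Kim-forks over $C$, so it suffices to show it is stable, which I plan to do by verifying directly that $\phi$ is an equation. Given an indiscernible sequence $(t_i, b_i)_{i < \omega}$ over $c$ satisfying $\phi(t_i, b_j, c)$ for all $i < j$, the polynomials $P, Q, R, S$ entering $\phi$ depend only on the type of the parameters and so are uniform along the sequence, reducing the existence of witnesses $y_i, w_i$ for $\phi(t_i, b_i, c)$ to the solvability of certain polynomial systems over $F$ — a Zariski-closed condition that one transfers from off-diagonal to diagonal pairs by the same polynomial-vanishing argument as in case (a).

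The main obstacle will be carrying this verification through rigorously: $\phi$ is a nested existential assertion over algebraic witnesses arising from Galois-theoretic obstructions (the open normal subgroups $M_1, M_2$ and the elements $\alpha, \beta, a', b'$), and one must show carefully that these witnesses combine correctly at the diagonal along an indiscernible sequence, which requires a close reading of the Chernikov-Ramsey construction. A backup approach, should the direct equational verification prove too delicate, is to replace $\phi$ by a coarser formula capturing just the underlying polynomial obstruction on $t$ — for instance, an assertion that $t$ lies in a specific Galois extension of $F$ determined by $b$ and $c$ — which is more transparently equational while still witnessing Kim-forking over $C$.
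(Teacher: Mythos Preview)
Your overall architecture matches the paper's: dispose of the $\ind^{\textnormal{SCF}}$-forking case by stability of polynomial (in)equations, and for the Galois-theoretic case show that the Chernikov--Ramsey formula $\phi$ is an equation in Srour's sense. But the mechanism you propose for the key step does not work.

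First, a minor point: your case (a) argument is broken when $b$ is a tuple. Having $p(a_0,b_j)=0$ for infinitely many $b_j$ does not force $p(a_0,Y)$ to vanish identically when $Y$ has more than one coordinate. The correct reason polynomial equations are equations is Noetherianity (DCC on instances), not root-counting; the conclusion you want still holds, but not for the reason you give.

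The real gap is in case (b). You plan to reduce $\phi(t_i,b_i,c)$ to ``solvability of certain polynomial systems --- a Zariski-closed condition'' and then transfer from off-diagonal to diagonal by a vanishing argument. This cannot work as stated: $\phi$ asserts that $S(X,w,t,c)$ is \emph{irreducible} of a prescribed degree, which is a Zariski-\emph{open} condition, not closed; and the clause ``for every root $z$ of $Q$, $F(z)$ contains a root of $S$'' is a statement about splitting fields, not a polynomial identity in the coefficients. There is no evident way to package this as a single Zariski-closed condition on $(t_i,b_i)$.

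What the paper actually does is quite different. For each $i<j$, fix witnesses $d_{ij}$ (for $y$) and $e_{ij}$ (for $w$), together with a root $f_{ij}$ of $Q(Z,d_{ij},b_j,c)$. The crucial observation is that the $d_{ij}$ lie among the finitely many roots of $P(Y,b_j,c)$ and the $e_{ij}$ among the finitely many roots of $R(W,a_i,c)$; by indiscernibility these root-sets have fixed sizes $m,n$, so the assignment $(i,j)\mapsto(d_{ij},e_{ij})$ is an $mn$-colouring of $[\omega]^2$. Ramsey's theorem then produces $k<i<j$ with $e_{ki}=e_{kj}$ and $d_{kj}=d_{ij}$. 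Since the $F(f_{\ast\ast})$ are Galois extensions generated by roots of $S$ and of $Q$, one chains
\[
F(f_{ki}) = F(f_{kj}) = F(f_{ij})
\]
via the shared witnesses, and this forces any putative counterexample to $\phi(a_i,b_i,c)$ to already sit inside $F(f_{ij})$, a contradiction. The argument is combinatorial (Ramsey on the finite witness set) plus Galois-theoretic (normality of the extensions $F(f_{ij})$), not a Zariski-closure argument. Your backup proposal --- replacing $\phi$ by a coarser ``$t$ lies in a specified Galois extension'' formula --- is too vague to assess, and you would need to check it still Kim-forks, which is exactly what makes the Chernikov--Ramsey formula delicate in the first place.
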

\begin{proof}
    It is enough to show that the formula $\phi(t, s, c)$ (in variables $t, s$) from the previous remark is stable. We prove something stronger (cf., Fact \ref{fact:equation-stable}): namely, we will show that $\phi(t,s,c)$ is an equation in Srour's sense.

    Suppose $(a_i,b_i)_{i < \omega}$ is a $C$-indiscernible sequence such that $\models \phi(a_i, b_j, c)$ for all $i < j$. Let us choose, for each $i < j$, $d_{ij}$ and $e_{ij}$ to be witnesses to $y$ and $w$ as stipulated by each appropriate instance of $\phi$, and also pick $f_{ij}$ to be a solution to $Q(Z, d_{ij}, b_j, c) = 0$. Since by choice they are all roots of $Q$, it follows that, for all $i < j$, $F(f_{ij})$ is a proper Galois extension of $F$ of degree $[\langle CB\beta \rangle : \langle CBb' \rangle]$ which is not contained in $F\langle C \rangle^\textnormal{alg}$. Note that, by definition of $\phi$, for each $i < j$, $F(f_{ij})$ is precisely the field generated by any of its roots of $S(X, e_{ij}, a_i, c) = 0$ over $F$.
    \begin{nclaim}
        There exist $k < i < j$ such that $e_{ki} = e_{kj}$ and $d_{kj} = d_{ij}$.
    \end{nclaim}
    \begin{proof}[Proof of Claim 1]
        Suppose that $P(Y, b_0, c) = 0$ has $m$ solutions and $R(W, a_0, c) = 0$ has $n$ solutions. By $C$-indiscernibility, we get the same number of solutions for any indices $i,j < \omega$. Thus, our choice of $d_{ij}$ and $e_{ij}$ defines an $mn$-colouring on $\omega^{(2)}$. Hence, by Ramsey's Theorem, there is an infinite subset $X \subseteq \omega$ such that $X^{(2)}$ is monochromatic. Pick $k,i,j \in X$ with $k < i < j$. Then $\{k,i\}, \{k,j\}, \{i,j\} \in X$, so it follows that $e_{ki} = e_{kj}$ and $d_{kj} = d_{ij}$, as required. \hfill \pushQED{$\qed_{\text{Claim 1}}$}
    \end{proof}
    Let $d_j := d_{kj} = d_{ij}$ and $e_k := e_{ki} = e_{kj}$. Then $F(f_{ki})$ and $F(f_{kj})$ both contain solutions to $S(X, e_k, a_k, c) = 0$, and since they are Galois, by our earlier remarks we get $F(f_{ki}) = F(f_{kj})$. Moreover, $F(f_{kj})$ and $F(f_{ij})$ also contain all solutions of $Q(Z, d_j, b_j, c) = 0$, so we get $F(f_{kj}) = F(f_{ij})$. Therefore, $F(f_{ki}) = F(f_{ij})$.
    \begin{nclaim}
        $\models \phi(a_i, b_i, c)$.
    \end{nclaim}
    \begin{proof}
        Assume not, for contradiction. So we can find a solution $f_{ii}$ of $Q(Z, d_{ki}, b_i, c) = 0$ such that $F(f_{ii})$ contains no solutions of $S(X, e_{ij}, a_i, c) = 0$. But note that, by this choice of $f_{ii}$ and the above remarks, it follows that $f_{ii} \in F(f_{ki}) = F(f_{ij})$, and hence, $F(f_{ii})$ contains a solution to $S(X, e_{ij}, a_i, c) = 0$, a contradiction. \pushQED{$\qed_{\text{Claim 2}}$}
    \end{proof}
    It follows from Claim 2 by $C$-indiscernibility of $(a_ib_i)_{i < \omega}$ that this holds for all $i < \omega$, as required.
\end{proof}
\end{example}
\section{On the relation between \texorpdfstring{$\NSOP_1$}{NSOP1} and rosy theories}
A frequent question in neostability theory concerns the relationship between the ``canonical'' independence relations arising from different classes of theories. One example is how $\find$ relates to $\thind$ in simple theories. A partial answer to this question appears in \cite[Corollary 5.1.3]{onshuus2006properties}:
\begin{theorem} \label{thm:simple-thind}
    Suppose that the stable forking conjecture holds. If $T$ is simple, then $\find = \thind$. 
\end{theorem}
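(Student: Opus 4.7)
My plan is to prove the two inclusions separately; only the direction $\thind \Rightarrow \find$ requires both simplicity and the stable forking conjecture.

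The implication $\find \Rightarrow \thind$ holds in any theory. It suffices to show that any formula that thorn-forks over $C$ also forks over $C$, and since thorn-forking is a finite disjunction of thorn-dividing formulas, this reduces to showing that thorn-dividing implies forking. If $\phi(x,b)$ thorn-divides over $C$, there is a tuple $c$ and an integer $k$ such that $\{\sigma(\phi(\M,b)) : \sigma \in \Aut(\M/Cc)\}$ is infinite and $k$-inconsistent. Taking a global $Cc$-invariant extension of $\tp(b/Cc)$ and building a Morley sequence $(b_i)_{i<\omega}$ with $b_0 = b$ yields a $Cc$-indiscernible, and hence $C$-indiscernible, sequence along which $\{\phi(x, b_i)\}$ is $k$-inconsistent, witnessing dividing of $\phi(x,b)$ over $C$.

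For the converse, I would argue by contrapositive. Suppose $\tp(a/Cb)$ forks over $C$. By the stable forking conjecture, applied in the simple theory $T$, there is a stable formula $\psi(x,y) \in \mathcal{L}(C)$ and a tuple $d$ from $Cb$ such that $\psi(x,d) \in \tp(a/Cb)$ and $\psi(x,d)$ forks---equivalently, by simplicity, divides---over $C$. Showing that $\psi(x,d)$ thorn-divides over $C$ then exhibits a thorn-forking formula in $\tp(a/Cb)$, establishing that $a$ is not thorn-independent from $b$ over $C$. The problem thus reduces to the following assertion: if $\psi(x,y)$ is stable and $\psi(x,d)$ divides over $C$, then $\psi(x,d)$ thorn-divides over $C$.

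The main obstacle is this reduction from stable dividing to thorn-dividing. To prove it, I would first extract a $C$-indiscernible sequence $(d_i)_{i<\omega}$ with $d_0 = d$ and $\{\psi(x,d_i)\}_{i<\omega}$ $k$-inconsistent, and use stability of $\psi$ to arrange that $(d_i)$ is in fact indiscernible as a set. Choosing a model $M \supseteq C$ containing the whole sequence, let $c \in M^{\eq}$ be the canonical parameter of the $\psi$-type of $d$ over $M$, available because $\psi$-types over models are definable by stability. I would then verify both conditions for strong dividing of $\psi(x,d)$ over $Cc$: on the one hand, the $\Aut(\M/Cc)$-conjugates of $\psi(\M,d)$ include the $k$-inconsistent family $\{\psi(\M,d_i)\}$; on the other, $d \notin \acl^{\eq}(Cc)$, since otherwise $\psi(x,d)$ would coincide with the unique nonforking $\psi$-extension over $Cc$, contradicting the $k$-inconsistency. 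The delicate coordination between the indiscernible-set structure, the canonical parameter, and the non-algebraicity of $d$ is the technical heart of the argument; it is precisely where the stability guaranteed by the stable forking conjecture is essential, since neither the canonical $\psi$-definition nor the indiscernible-set property would be available for a generic forking formula.
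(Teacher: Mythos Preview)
The paper does not prove this theorem; it is quoted from \cite[Corollary~5.1.3]{onshuus2006properties} as background and motivation. The closest the paper comes is Fact~\ref{fact:stable-forking-implies-th-forking}, which records (again by citation, to Onshuus and to \cite{hoffmann2023thorn}) the implication ``a stable forking formula forces \th-forking'' for types over models in $T^{\eq}$; that is precisely the reduction the second half of your argument targets, and the paper then uses it as a black box inside the proof of Theorem~\ref{thm:stable-kim-forking-nsop1-and-rosy-is-simple}.

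Your outline follows the architecture of Onshuus's original argument and is essentially on the right track. Two places in the key step deserve tightening. The phrase ``canonical parameter of the $\psi$-type of $d$ over $M$'' is not the right object: what you want is an imaginary $c$ coding the average $\psi^{*}$-type of the indiscernible \emph{set} $(d_i)$ (equivalently, the $\psi$-definition of that average), so that every permutation of the $d_i$ fixes $c$ and all $d_i$ lie in a single $\Aut(\M/Cc)$-orbit. The non-algebraicity $d \notin \acl^{\eq}(Cc)$ then follows immediately because the $d_i$ are infinitely many pairwise distinct $Cc$-conjugates of $d$; your detour through ``the unique nonforking $\psi$-extension'' is unnecessary and, as phrased, not quite coherent. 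Finally, invoking an imaginary $c$ places the argument in $T^{\eq}$---consistent with how Onshuus and this paper treat \th-independence, but worth stating explicitly.
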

More recently, Byungham Kim and Christian d'Elb\'{e}e posed independently (in \cite[Question 6.2]{kim2021weak} and \cite[\S 3.4]{d2021forking}, resp.) the following question:
\begin{question}
    Is there any $\NSOP_1$ rosy theory which is non-simple?
\end{question}
In what follows, we give a partial answer to this question under the assumption of the stable Kim-forking conjecture. This resembles Onshuus' result for simple theories (cf., Theorem \ref{thm:simple-thind}). But first, we need to collect some results from the literature. 

A close inspection of the proof appearing in \cite[Proposition 3.3]{hoffmann2023thorn} shows the following result, first proved by Onshuus in \cite[Theorem 5.1.1]{onshuus2006properties}:
\begin{fact} \label{fact:stable-forking-implies-th-forking}
    Let $M \prec N$ be models of $T = T^\eq$, and $p(x) \in S(N)$. If there is some $\phi(x,n) \in p(x)$ that forks over $M$ such that $\phi(x,y) \in \mathcal{L}(M)$ is stable, then $p(x)$ $\textnormal{\th}$-forks over $M$.
\end{fact}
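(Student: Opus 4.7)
The plan is to show directly that $\phi(x,n)$ itself $\textnormal{\th}$-forks over $M$, which suffices since $\phi(x,n) \in p(x)$. The concrete target is to produce a tuple $c \in \M^{\eq}$ over which $\phi(x,n)$ strongly divides; that is, $n \notin \acl(Mc)$ and the family $\{\phi(\M,n^\sigma) : \sigma \in \Aut(\M/Mc)\}$ is $k$-inconsistent for some $k < \omega$.

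The first step is to pass from forking to dividing. Since $M$ is a model and $\phi(x,n)$ forks over $M$, the classical fact that forking equals dividing over models yields that $\phi(x,n)$ divides over $M$. Extracting an indiscernible witness gives an $M$-indiscernible sequence $(n_i)_{i<\omega}$ with $n_0 = n$ and $\{\phi(x,n_i) : i<\omega\}$ inconsistent; by indiscernibility this family is in fact $k$-inconsistent for some $k < \omega$.

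The second step uses stability of $\phi(x,y)$ to construct $c$. Working in $T^{\eq}$, stability of $\phi$ gives that $\phi$-types are uniformly definable, and the standard local-stability machinery produces a canonical parameter for the $\phi$-definition of the average $\phi$-type of the sequence $(n_i)$, which I take as $c$. Such a $c$ lies in $\dcl^{\eq}$ of a finite tuple from the sequence and is fixed by every $M$-automorphism permuting the $n_i$; consequently the $n_i$ all share the same $\phi$-type --- and, with a careful choice of $c$, the same complete type --- over $Mc$.

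It then remains to verify strong dividing. Infinitely many $Mc$-conjugates of $n$ come from the $n_i$, which all realize $\tp(n/Mc)$, so $n \notin \acl(Mc)$. For $k$-inconsistency of the conjugate family $\{\phi(\M, n^\sigma)\}_\sigma$, the key observation is that any $k$-tuple from the $Mc$-orbit of $n$ can, by homogeneity and the stable canonical-base properties of $c$, be embedded into an $M$-indiscernible sequence with the same EM-type as $(n_i)_{i<\omega}$, and hence inherits the $k$-inconsistency. The main obstacle, and the technical heart of the argument, is precisely this last embedding step: choosing the correct $c$ so that \emph{every} $Mc$-conjugate of $n$ --- not merely the given $n_i$ --- fits into a sequence mirroring the original $\phi$-inconsistency pattern. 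This is exactly where the assumption $T = T^{\eq}$ is essential and where the full strength of Shelah-style local-stability canonical bases must be imported, following Onshuus's original approach.
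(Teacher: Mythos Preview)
The paper does not give its own proof here; the Fact is quoted from the literature (Onshuus, via the formulation in Hoffmann). Your sketch is recognisably the Onshuus strategy: reduce to dividing, extract a canonical-base parameter $c$ from the local stability of $\phi$, and verify that $\phi(x,n)$ strongly divides over $Mc$.

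Your first step, however, has a genuine error of justification. The assertion that ``forking equals dividing over models'' is \emph{not} a general fact: it is a theorem in $\NTP_2$ theories (Chernikov--Kaplan) and need not hold outside that class, while the hypotheses of the Fact place no such restriction on $T$. What actually makes the reduction go through is the stability of $\phi$, not merely that the base is a model: if $\phi(x,n)$ does not divide over $M$ then, by local stability, it lies in a global $\phi$-type definable over $\acl^{\eq}(M)=M$; for a stable formula over a model, definable coincides with finitely satisfiable, so $\phi(x,n)$ extends to a global coheir of $M$ and therefore does not fork over $M$. You should replace your appeal to a nonexistent general principle with this argument.

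In the last step you correctly isolate the real difficulty---controlling \emph{all} $Mc$-conjugates of $n$, not only the $n_i$---but the mechanism you propose (embedding an arbitrary $k$-tuple of conjugates into an $M$-indiscernible sequence with the same EM-type as $(n_i)$) is not how the cited proofs proceed and is not obviously available: sharing a type over $Mc$ does not by itself let you realise an arbitrary $k$-tuple of conjugates as an initial segment of an $M$-conjugate of $(n_i)$. Onshuus instead chooses $c$ so that membership in $\tp(n/Mc)$ already pins down the $\phi$-behaviour needed for $k$-inconsistency directly, and the conclusion follows from the defining property of $c$ rather than from any EM-type embedding. Since you explicitly defer this point to the literature your sketch is acceptable as an outline, but the heuristic you offer for it is misleading.
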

Recall that $(M_i)_{i < \kappa}$ is a \textbf{continuous chain} of models if $M_i \models T$ for all $i$, $M_i \subseteq M_j$ for all $i < j$, and $M_\alpha = \bigcup_{\beta < \alpha} M_\beta$ for $\alpha$ limit. We assume our continuous chains are always increasing. We will use the following version of local character:
\begin{lemma} \label{lem:rosy-local-character}
    Suppose $T = T^\eq$ is rosy. Then $\thind$ satisfies \textbf{local character over models}, that is: Let $\kappa > \size{T}^+$ be regular, $(M_i)_{i < \kappa}$ be a continuous chain of models of $T$ such that $\size{M_i} < \kappa$ for all $i$, and let $M := \bigcup_{i < \kappa} M_i$ be such that $\size{M} = \kappa$. Let $a$ be a finite tuple. Then there is $i < \kappa$ such that $a \thind_{M_i} M$. 
\end{lemma}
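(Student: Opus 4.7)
The plan is to reduce the statement to the ordinary local character of $\thind$ (which holds by rosiness), combined with right base monotonicity (Fact \ref{fact:properties-of-thorn-independence}) and a standard cofinality argument.

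First, I would apply local character of $\thind$ directly to the finite tuple $a$ and the set $M$: since $T=T^\eq$ is rosy, there is some cardinal $\kappa(a)$, which for finite $a$ we may take to be at most $|T|^+$, and some $C\subseteq M$ with $|C|<\kappa(a)\leq |T|^+$ such that $a\thind_{C} M$. In particular, $|C|<\kappa$ by our assumption that $\kappa>|T|^+$.

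Next I would use the regularity of $\kappa$ and the continuity of the chain to absorb $C$ into some $M_i$. Enumerate $C=\{c_\alpha:\alpha<\lambda\}$ with $\lambda=|C|<\kappa$. For each $\alpha<\lambda$, since $c_\alpha\in M=\bigcup_{i<\kappa}M_i$, there is some $j(\alpha)<\kappa$ with $c_\alpha\in M_{j(\alpha)}$. Because $\kappa$ is regular and $\lambda<\kappa$, the supremum $i:=\sup_{\alpha<\lambda}j(\alpha)$ is still $<\kappa$, and by continuity of the chain $C\subseteq M_i$.

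Finally, we have $C\subseteq M_i\subseteq M$ and $a\thind_C M$, so right base monotonicity (Fact \ref{fact:properties-of-thorn-independence}) yields $a\thind_{M_i}M$, as required. I do not expect any real obstacle here: the only points that need mild care are (i) that the local character bound $\kappa(a)$ for finite $a$ in a rosy theory is at most $|T|^+$ (so that $|C|<\kappa$), and (ii) invoking continuity of the chain at the supremum step, both of which are routine.
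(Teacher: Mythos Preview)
Your argument is correct and is precisely the standard derivation the paper has in mind: the paper does not give a self-contained proof but refers to \cite[Lemma 9.6]{dobrowolski2022kim} for an abstract argument using only the properties in Fact~\ref{fact:properties-of-thorn-independence}, and that argument is exactly your ``local character $+$ regularity/cofinality $+$ right base monotonicity'' computation. The only point worth being explicit about, as you note in (i), is that in a rosy theory the local character bound for a finite tuple is $\leq |T|^+$, which is standard.
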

\begin{proof}
    For an abstract proof of this result at the level of independence relations, see, e.g., \cite[Lemma 9.6]{dobrowolski2022kim} (using the properties listed in Fact \ref{fact:properties-of-thorn-independence}).
\end{proof}
We also require some important facts about $\NSOP_1$ theories:
\begin{fact} \label{fact:nsop_1-simple-iff-base-mon}
\begin{enumerate}[(i)]
    \item \cite[Proposition 8.8]{kaplan2020kim} An $\NSOP_1$ theory $T$ is simple iff $\kind$ satisfies base monotonicity over models.
    \item (This is a corollary to \cite[Theorem 6.5]{kaplan2020kim}) Let $T$ be $\NSOP_1$ and $M \models T$. If $a \kind_M b$ and $I = (b_i)_{i < \omega}$ is a $\kind$-Morley sequence over $M$ with $b_0 = b$, then there is $a' \equiv_{Mb} a$ such that $a'b_i \equiv_M ab$ for all $i < \omega$.
\end{enumerate}
\end{fact}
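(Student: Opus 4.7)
The plan is to address (i) by invoking \cite[Proposition 8.8]{kaplan2020kim} directly, since this is the original reference and no new argument is required there. The only piece needing a short derivation is part (ii).

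For (ii), I would apply the following content of \cite[Theorem 6.5]{kaplan2020kim}: in an $\NSOP_1$ theory, whenever $(b_i)_{i < \omega}$ is a $\kind$-Morley sequence over a model $M$ with $b_0 = b$, and $p(x, b) = \tp(a/Mb)$ does not Kim-divide over $M$, the set $\bigcup_{i < \omega} p(x, b_i)$ is consistent, where each $p(x, b_i)$ is the $M$-conjugate of $p(x, b)$ under an automorphism sending $b$ to $b_i$. Since $a \kind_M b$ by hypothesis, no formula in $\tp(a/Mb)$ Kim-forks (and hence none Kim-divides) over $M$, so the hypothesis of the cited theorem is met.

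Let $a'$ realise $\bigcup_{i < \omega} p(x, b_i)$. Then for each $i$, $a' b_i \equiv_M a b$; in particular, taking $i = 0$ yields $a' \equiv_{Mb} a$, and the remaining equalities constitute exactly the conclusion of (ii). The only subtlety is passing from what is typically formulated at the level of a single non-Kim-dividing formula in the cited theorem to the full type $p(x)$; this is a routine compactness argument once one knows the formula-level statement, so I do not anticipate any genuine obstacle. The substantive content is entirely in Kaplan--Ramsey's proof that $\kind$-Morley sequences witness non-Kim-dividing, which we take as a black box.
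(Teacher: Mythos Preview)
Your proposal is correct. The paper does not actually supply a proof of this Fact at all: it simply records (i) as \cite[Proposition 8.8]{kaplan2020kim} and (ii) as ``a corollary to \cite[Theorem 6.5]{kaplan2020kim}'', so your short derivation of (ii) via compactness from the formula-level statement is exactly the intended unpacking of that parenthetical remark, and there is nothing further to compare.
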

We also adapt the terminology from \cite{dobrowolski2022kim}: given a model $M \models T$ and a sequence $(a_i)_{i < \kappa}$, there exists a continuous chain of models $(M_i)_{i < \kappa}$ such that $M \subseteq M_0$, $a_{<i} \subset M_i$, and $a_i \thind_M M_i$ for all $i < \kappa$. We will call this sequence $(M_i)_{i < \kappa}$ a \textbf{\th-independent chain} for $(a_i)_{i < \kappa}$ over $M$. 
\begin{theorem} \label{thm:stable-kim-forking-nsop1-and-rosy-is-simple}
    If $T$ is $\NSOP_1$ and rosy, and $T^\eq$ has the stable Kim-forking property over models, then $T$ is simple.
\end{theorem}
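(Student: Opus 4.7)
Working in $T^\eq$ throughout, by Fact \ref{fact:nsop_1-simple-iff-base-mon}(i) it suffices to show that $\kind$ satisfies base monotonicity over models. The first ingredient I would establish is that $\thind \implies \kind$ over models. Suppose $\tp(a/MB)$ Kim-forks over a model $M$. The stable Kim-forking hypothesis yields a stable formula $\phi(x,y) \in \mathcal{L}(M)$ and parameters $b \in B$ with $\phi(x,b) \in \tp(a/MB)$ Kim-forking, and hence forking (since every $M$-invariant Morley sequence is $M$-indiscernible), over $M$. Stability of $\phi$ together with Fact \ref{fact:stable-forking-implies-th-forking} then shows that $\tp(a/MB)$ thorn-forks over $M$; contrapositively, $\thind \implies \kind$ over models.

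Combined with Lemma \ref{lem:rosy-local-character}, this upgrades to a local character statement for $\kind$ over models: for any finite tuple $a$ and any continuous chain $(M_i)_{i<\kappa}$ of small models with $\kappa > \size{T}^+$ regular and $M = \bigcup_i M_i$, there is some $i < \kappa$ with $a \kind_{M_i} M$. Equivalently, given any sequence $(a_j)_{j<\kappa}$, any \th-independent chain $(M_j)_{j<\kappa}$ for it over a base model $M_0$ yields $a_j \kind_{M_0} a_{<j}$ for every $j$.

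The final step, which I expect to be the main technical obstacle, is to leverage this to derive base monotonicity. My plan is: assume for contradiction a configuration $M \preceq N$, $a$, $B \supseteq N$ with $a \kind_M B$ but $\tp(a/NB)$ Kim-forking over $N$, and build a long $M$-indiscernible $\kind$-Morley sequence $(B_i)_{i<\omega}$ with $B_0 = B$ by taking a \th-independent chain of $M$-conjugates of $B$ and applying Erd\H{o}s--Rado to extract an indiscernible, relying on strong finite character of $\kind$ in $\NSOP_1$ to preserve term-wise Kim-independence. Then Fact \ref{fact:nsop_1-simple-iff-base-mon}(ii) produces $a' \equiv_{MB} a$ with $a' B_i \equiv_M aB$ for all $i$. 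The failure of Kim-independence over $N$ transports along the sequence via $M$-automorphisms to each $B_i$ together with its copy $N_i \subseteq B_i$ of $N$, while the local character from the previous paragraph forces $a' \kind_{M_j} \bigcup_i B_i$ for some small $M_j$; closing the contradiction --- matching up the extracted Morley sequence with the transported failures over the conjugates of $N$, so that the independence forced by local character conflicts with the persistent non-independence --- is the real work of the proof.
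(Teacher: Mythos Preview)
Your first two ingredients are correct and match the paper's argument, modulo one small gap: Fact \ref{fact:stable-forking-implies-th-forking} is stated for types over a \emph{model} $N \succ M$, so before invoking it you should use right extension of $\thind$ to pass from $MB$ to a model containing $b$, exactly as the paper does.

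The divergence is in your final step, and the plan you sketch there does not close. The paper does \emph{not} prove base monotonicity of $\kind$ directly; it proves the reverse implication $\kind \implies \thind$ over models, obtaining $\kind = \thind$, and then base monotonicity for $\kind$ comes for free from that of $\thind$ (Fact \ref{fact:properties-of-thorn-independence}). The setup is exactly what you describe --- a long $\kind$-Morley sequence $(b_i)_{i<\kappa}$ built via a \th-independent chain $(M_i)_{i<\kappa}$ over $M$, together with the realisation $a'$ from Fact \ref{fact:nsop_1-simple-iff-base-mon}(ii) --- but the endgame is clean: local character of $\thind$ gives $a' \thind_{M_{i_0}} b_{i_0}$ for some $i_0$, and then \emph{symmetry and transitivity of $\thind$} combine this with $b_{i_0} \thind_M M_{i_0}$ (which is built into the \th-independent chain) to descend to $a' \thind_M b_{i_0}$, whence $a \thind_M b$ by invariance.

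Your plan to ``transport the failure over the conjugates $N_i$ of $N$ and match them against the $M_j$'' is a red herring: the $N_i$ sit inside $B_i$, not inside the chain models $M_j$, and there is no direct comparison available between Kim-independence over $N_i$ and over $M_j$. The only way to close your contradiction is, in effect, to run the paper's descent via $\thind$-transitivity to obtain $a \thind_M B$, then apply base monotonicity of $\thind$ to get $a \thind_N B$, and finally your first ingredient to get $a \kind_N B$. So the missing idea is precisely that symmetry-and-transitivity step for $\thind$; once you see it, the direct route through $\kind = \thind$ is both shorter and avoids the bookkeeping with the $N_i$ altogether.
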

\begin{proof}
    Let $T$ be $\NSOP_1$ and rosy. Recall (see, e.g., \cite[Remark \& Example 2.3.8]{kim2014simplicity}, which can be also adapted for $\NSOP_1$) that $T$ is $\NSOP_1$ (resp., simple) iff $T^\eq$ is $\NSOP_1$ (resp., simple), so without loss of generality we may assume $T = T^\eq$. In particular, by assumption, $T$ has the stable Kim-forking property over models.
    \begin{claim}
        $\kind = \thind$ over models. 
    \end{claim}
    \begin{proof}
        ($\Leftarrow$) Suppose that $a \ind^{\text{\th}}_M b$ but $a \nind_M^{\text{K}} b$. By the stable Kim-forking property, there is some $\mathcal{L}(M)$-formula $\phi(x,y)$ that is stable such that $\phi(x,b) \in \tp(a/Mb)$ Kim-forks over $M$. By right extension, we can find some $N \supset M \cup \{b\}$ such that $a \ind_M^{\text{\th}} N$ and $N \models T$. Then $\phi(x,b) \in \tp(a/N)$, and in particular $\phi(x,b)$ forks over $M$. Thus, by Fact \ref{fact:stable-forking-implies-th-forking}, $\tp(a/N)$ \th-forks over $M$, a contradiction.

        ($\Rightarrow$) We adapt the proof from \cite[Theorem 9.1]{dobrowolski2022kim} (which is in turn based on the original proof from \cite{chernikov2023transitivity}). So suppose that $a \kind_M b$. By the above remark, we can find a long $M$-indiscernible sequence $(b_i)_{i < \kappa}$ with a \th-independent chain $(M_i)_{i < \kappa}$ of models of $T$. By the previous paragraph, we have $b_i \kind_M M_i$ for all $i < \kappa$. In particular, $(b_i)_{i < \kappa}$ is $\kind$-Morley over $M$, and thus, by Fact \ref{fact:nsop_1-simple-iff-base-mon}(ii), there is $a' \equiv_{Mb} a$ such that $a'b_i \equiv_M ab$ for all $i < \omega$. 

        By monotonicity and downwards Löwenheim-Skolem, we may assume $\kappa = (\size{T} + \size{M})^+$ and $(M_i)_{i \in \kappa}$ is a continuous chain with $\size{M_i} < \kappa$. Let $M_\kappa := \bigcup_{i \in \kappa} M_i$. Since $T$ is rosy, by Lemma \ref{lem:rosy-local-character}, $\thind$ satisfies local character over models. Therefore, there is $i_0 < \kappa$ such that $a' \thind_{M_{i_0}} M_\kappa$, and so by monotonicity, $a' \thind_{M_{i_0}} b_{i_0}$. But also $b_{i_0} \thind_M M_{i_0}$. Hence, by symmetry and transitivity, $a' \thind_M b_{i_0}$, so $a \thind_M b$ by invariance, as required. \hfill \pushQED{$\qed_{\text{Claim}}$}
    \end{proof}
    This equality implies that $\kind$ satisfies base monotonicity over models. Therefore, by Fact \ref{fact:nsop_1-simple-iff-base-mon}(i), $T$ is simple. This concludes the proof.
\end{proof}
\begin{corollary}
    Suppose that the stable Kim-forking conjecture holds. If $T$ is $\NSOP_1$ and rosy, then $T$ is simple.
\end{corollary}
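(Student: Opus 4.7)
The plan is to apply Theorem \ref{thm:stable-kim-forking-nsop1-and-rosy-is-simple} directly; once the hypothesis on stable Kim-forking is arranged, the conclusion is immediate. So the only real task is to verify the three hypotheses of that theorem from the assumptions of the corollary: namely, that $T$ is $\NSOP_1$, that $T$ is rosy, and that $T^\eq$ has the stable Kim-forking property over models.

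The first two are part of our assumptions, so the only nontrivial step is the third. To get it, I would use (as in the proof of Theorem \ref{thm:stable-kim-forking-nsop1-and-rosy-is-simple}) the fact that $T$ is $\NSOP_1$ iff $T^\eq$ is $\NSOP_1$, as recorded via \cite[Remark \& Example 2.3.8]{kim2014simplicity}. Since $T$ is $\NSOP_1$ by hypothesis, $T^\eq$ is $\NSOP_1$, and so the assumed stable Kim-forking conjecture, applied to $T^\eq$, yields that $T^\eq$ has the stable Kim-forking property over models.

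At this point all the hypotheses of Theorem \ref{thm:stable-kim-forking-nsop1-and-rosy-is-simple} are in place, and the theorem delivers that $T$ is simple. There is no genuine obstacle here; the corollary is a straightforward packaging of the main theorem together with the pass-through to $T^\eq$, and the conjecture is invoked precisely to supply the only piece that does not follow from the assumption that $T$ itself is $\NSOP_1$ and rosy.
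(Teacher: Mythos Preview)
Your proposal is correct and matches the paper's intended argument: the corollary is stated without proof immediately after Theorem~\ref{thm:stable-kim-forking-nsop1-and-rosy-is-simple}, and your derivation---passing to $T^\eq$, noting it is $\NSOP_1$, and invoking the conjecture there to supply the missing hypothesis---is exactly the unwinding the paper leaves implicit.
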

\begin{remark}
    It is an open problem whether, in order to conclude that $T^\eq$ has stable Kim-forking over models, it suffices to prove stable Kim-forking for types over real parameters (see \cite[\S 5]{bossut2024some} for more details).
\end{remark}
Let us note that this pattern fails for higher levels of the $\SOP_n$ hierarchy: for instance, the generic triangle-free graph, which is $\NSOP_4$ but $\SOP_3$, is rosy.

A corollary of this result emphasizes the distinction between simple and $\NSOP_1$ theories. Recall that $T$ is said to be \textbf{pregeometric} if $\acl$ satisfies exchange, i.e., whenever $a \in \acl(Ab) \setminus \acl(A)$, we have $b \in \acl(Aa)$. This large class of theories contains those of strongly minimal and o-minimal theories, plus a plethora of other $\NIP$ examples, such as the theory of $p$-adically closed fields. We 
can also find several examples of simple pregeometric theories, such as simple free amalgamation theories (cf., \cite{conant2017axiomatic}) and pseudofinite fields (cf., \cite{hrushovski1991pseudo}). 

Generally speaking, if $T$ is pregeometric, $T^\eq$ need not be so. However, all that is required to extend the notion of dimension given by algebraic closure to the imaginary case is the weakest form of elimination of imaginaries, namely, \textbf{geometric elimination of imaginaries}, i.e., for any imaginary $e$, there is some real tuple $a$ such that $e \in \acl^\eq(a)$ and $a \in \acl(e)$. This is the main idea underlying the proof of the following result, due to Ealy and Onshuus:
\begin{fact}[\protect{\cite[Theorem 4.12]{ealy2007characterizing}}]
    If $T$ is pregeometric and has geometric elimination of imaginaries, then $T$ is rosy.
\end{fact}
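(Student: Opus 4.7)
The plan is to promote the pregeometry on $\acl$ to a dimension theory on $T^\eq$ using geometric elimination of imaginaries, and then use this dimension to witness local character of $\thind$ and conclude rosiness.

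First, I would use the pregeometric hypothesis to define on finite real tuples the rank $\dim(a/C)$ as the size of an $\acl$-basis of $a$ over $C$, and the algebraic independence relation $\aind$ by $a \aind_C b$ iff $\dim(a/Cb) = \dim(a/C)$. Exchange makes $\aind$ symmetric and monotonic on reals, and local character is immediate: a finite tuple has finite dimension, so a finite subset $C$ of any base $B$ already realises $\dim(a/C) = \dim(a/B)$, i.e., $a \aind_C B$.

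Second, I would extend this setup to $T^\eq$ by pulling back through geometric EI. For each imaginary $e$, geometric EI supplies a real tuple $a_e$ with $e \in \acl^\eq(a_e)$ and $a_e \in \acl^\eq(e)$; define $\dim(e/A) := \dim(a_e / a_A)$ for a similarly chosen real representative $a_A$ of $A$. Well-definedness is the central technical point: two representatives $a_e$ and $a_e'$ of $e$ are interalgebraic, and exchange in the real pregeometry forces them to share the same dimension over any parameter set. The induced relation $\aind$ on $T^\eq$ inherits symmetry, monotonicity, and local character from the real case.

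Third, I would establish the implication $\aind \Rightarrow \thind$ in $T^\eq$. Contrapositively, suppose $\phi(x,b) \in \tp(a/Cb)$ $\th$-divides over $C$, i.e., strongly divides over some $Cc$. Then $k$-inconsistency of the $\Aut(\M^\eq/Cc)$-orbit of $\phi(\M^\eq, b)$ places $a$ in fewer than $k$ such conjugates, which in turn forces $b \in \acl^\eq(Cca)$; since the orbit of $b$ over $Cc$ is infinite (from strong dividing), $b \notin \acl^\eq(Cc)$, and exchange in the extended pregeometry yields $a \in \acl^\eq(Ccb) \setminus \acl^\eq(Cc)$. A careful reduction via the auxiliary parameter $c$ translates this to $\dim(a/Cb) < \dim(a/C)$, so $a \aind_C b$ forbids any $\th$-dividing formula in $\tp(a/Cb)$ and hence $a \thind_C b$.

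Local character of $\thind$ then follows automatically: for any finite imaginary $a$ and any $B \subseteq \M^\eq$, a small $C \subseteq B$ witnessing $a \aind_C B$ also witnesses $a \thind_C B$, so $T$ is rosy. The hard part will be Step~2, extending $\dim$ to $T^\eq$ using only geometric rather than full EI, and the transfer step at the end of Step~3, where the dependence obtained over the auxiliary parameter $c$ must be passed down to $C$; here the careful choice of real representatives and repeated applications of exchange in $T^\eq$ are essential.
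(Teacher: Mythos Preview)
The paper does not give its own proof of this fact; it is stated with a citation to Ealy--Onshuus, preceded only by the one-line hint that geometric elimination of imaginaries is exactly what is needed to push the $\acl$-dimension from the home sort to $T^\eq$. Your outline follows that hint and is, in broad strokes, the argument in the cited reference, so there is little to compare against. Two points in your Step~3, however, are not yet right.

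First, $k$-inconsistency of the $Cc$-orbit of $\phi(\M^\eq,b)$ does \emph{not} force $b\in\acl^\eq(Cca)$: distinct conjugates of $b$ over $Cc$ may define the same set. What it forces is that the canonical parameter $e=\ulcorner\phi(x,b)\urcorner$ lies in $\acl^\eq(Cca)\setminus\acl^\eq(Cc)$. Since $e\in\dcl^\eq(b)$, the inequality $\dim(a/Ccb)<\dim(a/Cc)$ survives, so this is only a local repair.

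Second, and more seriously, the descent from $Cc$ to $C$ cannot be done by ``repeated applications of exchange'' as your last paragraph suggests. Nothing prevents the auxiliary parameter $c$ from already eating into $\dim(a/C)$, in which case $\dim(a/Ccb)<\dim(a/Cc)$ says nothing about $\dim(a/Cb)$ versus $\dim(a/C)$. The clean fix uses \emph{extension} for $\aind$ (which you have once Step~2 is complete): replace $c$ by some $c'\equiv_{Cb} c$ with $a\aind_{Cb} c'$. Strong dividing of $\phi(x,b)$ over $Cc'$ depends only on $\tp(c'/Cb)$ and is therefore preserved, and now
\[
\dim(a/Cb)=\dim(a/Cbc')<\dim(a/Cc')\le\dim(a/C).
\]
Alternatively, bypass the direct computation and invoke Adler's theorem that $\thind$ is the weakest automorphism-invariant relation satisfying monotonicity, base monotonicity, transitivity, normality, extension, finite character and anti-reflexivity; Step~3 then reduces to checking these axioms for your extended $\aind$, which is routine once Step~2 is in place.
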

Thus, together with Theorem \ref{thm:stable-kim-forking-nsop1-and-rosy-is-simple}, this implies the following:
\begin{corollary}\label{cor:stable-kf-implies-no-nsop1-pregeometric-theory}
    Suppose the stable Kim-forking conjecture holds. Then there is no strictly $\NSOP_1$ pregeometric theory with geometric elimination of imaginaries.
\end{corollary}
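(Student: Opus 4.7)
The plan is to argue by contradiction, assembling the two main ingredients stated just above the corollary: the Ealy-Onshuus fact that pregeometric theories with geometric elimination of imaginaries are rosy, together with Theorem \ref{thm:stable-kim-forking-nsop1-and-rosy-is-simple}. So I would begin by supposing, for contradiction, that $T$ is a strictly $\NSOP_1$ (i.e., $\NSOP_1$ but not simple) pregeometric theory with geometric elimination of imaginaries.

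From there, the first step is to apply the Ealy-Onshuus fact to deduce that $T$ is rosy. The second step is to pass to the imaginary level: since $T$ is $\NSOP_1$, so is $T^\eq$ (this is the standard observation invoked at the very start of the proof of Theorem \ref{thm:stable-kim-forking-nsop1-and-rosy-is-simple}), and therefore the stable Kim-forking conjecture, which we are assuming, applies to $T^\eq$ and yields that $T^\eq$ has the stable Kim-forking property over models.

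At this point the hypotheses of Theorem \ref{thm:stable-kim-forking-nsop1-and-rosy-is-simple} are all in place: $T$ is $\NSOP_1$ and rosy, and $T^\eq$ has the stable Kim-forking property over models. The theorem then forces $T$ to be simple, contradicting strictness and completing the argument. There is no real obstacle here; the corollary is a direct assembly of previously recorded results, and the only point requiring any care is that the stable Kim-forking conjecture is being applied to $T^\eq$ rather than $T$, which is exactly the form in which Theorem \ref{thm:stable-kim-forking-nsop1-and-rosy-is-simple} requires it and is legitimate because $\NSOP_1$ passes to $T^\eq$.
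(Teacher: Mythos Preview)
Your proposal is correct and matches the paper's approach exactly: the paper simply states that the corollary follows from the Ealy--Onshuus fact together with Theorem \ref{thm:stable-kim-forking-nsop1-and-rosy-is-simple}, and your write-up unpacks precisely that implication. The extra care you take in noting that the stable Kim-forking conjecture is applied to $T^\eq$ is appropriate and consistent with how the theorem is stated.
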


\bibliographystyle{amsalpha}
\bibliography{list}

\providecommand{\bysame}{\leavevmode\hbox to3em{\hrulefill}\thinspace}
\providecommand{\MR}{\relax\ifhmode\unskip\space\fi MR }
% \MRhref is called by the amsart/book/proc definition of \MR.
\providecommand{\MRhref}[2]{%
  \href{http://www.ams.org/mathscinet-getitem?mr=#1}{#2}
}
\providecommand{\href}[2]{#2}
\begin{thebibliography}{BFM24}

\bibitem[Adl09]{adler2009geometric}
Hans Adler, \emph{A geometric introduction to forking and thorn-forking}, Journal of Mathematical Logic \textbf{9} (2009), no.~01, 1--20.

\bibitem[Bal88]{baldwin1988fundamentals}
John~T. Baldwin, \emph{Fundamentals of stability theory}, Perspectives in Mathematical Logic, Springer-Verlag, Berlin, 1988. \MR{918762}

\bibitem[BFM24]{baldwin2024simple}
John Baldwin, James Freitag, and Scott Mutchnik, \emph{Simple homogeneous structures and indiscernible sequence invariants}, arXiv preprint arXiv:2405.08211 (2024).

\bibitem[Bos24a]{bossut2024stable}
Yvon Bossut, \emph{A note on stable {K}im-forking}, working paper or preprint, November 2024.

\bibitem[Bos24b]{bossut2024some}
\bysame, \emph{On some {F}raisse limits with free amalgamation}, arXiv preprint arXiv:2403.07616 (2024).

\bibitem[Cha98]{chatzidakis1998model}
Zo{\'e} Chatzidakis, \emph{Model theory of profinite groups having the {I}wasawa property}, Illinois Journal of Mathematics \textbf{42} (1998), no.~1, 70--96.

\bibitem[Cha99]{chatzidakis1999simplicity}
\bysame, \emph{Simplicity and independence for pseudo-algebraically closed fields}, London Mathematical Society Lecture Note Series (1999), 41--62.

\bibitem[Cha08]{chatzidakis2008independence}
\bysame, \emph{Independence in (unbounded) {PAC} fields, and imaginaries}, unpublished note (2008).

\bibitem[Che14]{chernikov2014theories}
Artem Chernikov, \emph{Theories without the tree property of the second kind}, Annals of Pure and Applied Logic \textbf{165} (2014), no.~2, 695--723.

\bibitem[CKR23]{chernikov2023transitivity}
Artem Chernikov, Byunghan Kim, and Nicholas Ramsey, \emph{Transitivity, lowness, and ranks in {$\text{NSOP}_1$} theories}, The Journal of Symbolic Logic \textbf{88} (2023), no.~3, 919--946.

\bibitem[Con17]{conant2017axiomatic}
Gabriel Conant, \emph{An axiomatic approach to free amalgamation}, J. Symb. Log. \textbf{82} (2017), no.~2, 648--671. \MR{3663421}

\bibitem[CR16]{chernikov2016model}
Artem Chernikov and Nicholas Ramsey, \emph{On model-theoretic tree properties}, Journal of Mathematical Logic \textbf{16} (2016), no.~02, 1650009.

\bibitem[d'E21]{d2021forking}
Christian d'Elb{\'e}e, \emph{Forking, imaginaries, and other features of $\mathrm{ACFG}$}, The Journal of Symbolic Logic \textbf{86} (2021), no.~2, 669--700.

\bibitem[DK22]{dobrowolski2022kim}
Jan Dobrowolski and Mark Kamsma, \emph{Kim-independence in positive logic}, Model Theory \textbf{1} (2022), no.~1, 55--113.

\bibitem[DS04]{dzamonja2004maximality}
Mirna D{\v{z}}amonja and Saharon Shelah, \emph{On {$\triangleleft^*$}-maximality}, Annals of Pure and Applied Logic \textbf{125} (2004), no.~1-3, 119--158.

\bibitem[EO07]{ealy2007characterizing}
Clifton Ealy and Alf Onshuus, \emph{Characterizing rosy theories}, The Journal of Symbolic Logic \textbf{72} (2007), no.~3, 919--940.

\bibitem[Gra99]{granger1999stability}
Nicolas Granger, \emph{Stability, simplicity and the model theory of bilinear forms}, The University of Manchester (United Kingdom), 1999.

\bibitem[HP23]{hoffmann2023thorn}
Daniel~Max Hoffmann and Anand Pillay, \emph{Thorn forking, weak normality, and theories with selectors}, The Journal of Symbolic Logic \textbf{88} (2023), no.~4, 1354--1366.

\bibitem[Hru91]{hrushovski1991pseudo}
Ehud Hrushovski, \emph{Pseudo-finite fields and related structures}, Model theory and applications \textbf{11} (1991), 151--212.

\bibitem[Kim01]{kim2001simplicity}
Byunghan Kim, \emph{Simplicity, and stability in there}, The Journal of Symbolic Logic \textbf{66} (2001), no.~2, 822--836.

\bibitem[Kim14]{kim2014simplicity}
\bysame, \emph{Simplicity theory}, Oxford Logic Guides, vol.~53, Oxford University Press, Oxford, 2014. \MR{3156332}

\bibitem[Kim21]{kim2021weak}
\bysame, \emph{Weak canonical bases in {$\text{NSOP}_1$} theories}, The Journal of Symbolic Logic \textbf{86} (2021), no.~3, 1259--1281.

\bibitem[KK11]{kim2011notions}
Byunghan Kim and Hyeung-Joon Kim, \emph{Notions around tree property 1}, Annals of Pure and Applied Logic \textbf{162} (2011), no.~9, 698--709.

\bibitem[KR20]{kaplan2020kim}
Itay Kaplan and Nicholas Ramsey, \emph{On {K}im-independence}, Journal of the European Mathematical Society \textbf{22} (2020), no.~5, 1423--1474.

\bibitem[KR21]{kaplan2021transitivity}
\bysame, \emph{Transitivity of {K}im-independence}, Advances in Mathematics \textbf{379} (2021), 107573.

\bibitem[KRS19]{kaplan2019local}
Itay Kaplan, Nicholas Ramsey, and Saharon Shelah, \emph{Local character of {K}im-independence}, Proceedings of the American Mathematical Society \textbf{147} (2019), no.~4, 1719--1732.

\bibitem[Mon17]{montenegro2017pseudo}
Samaria Montenegro, \emph{Pseudo real closed fields, pseudo p-adically closed fields and {$\text{NTP}_2$}}, Annals of Pure and Applied Logic \textbf{168} (2017), no.~1, 191--232.

\bibitem[MPZ20]{martin2020equational}
Amador Martin-Pizarro and Martin Ziegler, \emph{Equational theories of fields}, The Journal of Symbolic Logic \textbf{85} (2020), no.~2, 828--851.

\bibitem[Ons03]{onshuus2003th}
Alf Onshuus, \emph{Th-forking, algebraic independence and examples of rosy theories}, arXiv preprint math/0306003 (2003).

\bibitem[Ons06]{onshuus2006properties}
\bysame, \emph{Properties and consequences of thorn-independence}, The Journal of Symbolic Logic \textbf{71} (2006), no.~1, 1--21.

\bibitem[PS84]{pillay1984closed}
Anand Pillay and Gabriel Srour, \emph{Closed sets and chain conditions in stable theories}, The Journal of symbolic logic \textbf{49} (1984), no.~4, 1350--1362.

\bibitem[She78]{shelah1978classification}
Saharon Shelah, \emph{Classification theory and the number of nonisomorphic models}, Studies in Logic and the Foundations of Mathematics, vol.~92, North-Holland Publishing Co., Amsterdam-New York, 1978. \MR{513226}

\end{thebibliography}
\end{document}